\documentclass[a4paper]{article}
\usepackage[utf8]{inputenc}
\usepackage{amssymb,amsthm,amsmath,graphicx,
bm,ebezier,mathtools,amsfonts}
\usepackage{color}
\usepackage[noend]{algpseudocode}
\usepackage[ruled,vlined,linesnumbered]{algorithm2e}
\usepackage{hyperref}
\usepackage{enumitem}
\usepackage{amsmath}
\usepackage{breqn}
\usepackage{todonotes}

\newtheorem{theorem}{Theorem}[section]

\newtheorem{proposition}[theorem]{Proposition}
\newtheorem{corollary}[theorem]{Corollary}
\newtheorem{lemma}[theorem]{Lemma}

\theoremstyle{remark}
\newtheorem{example}[theorem]{Example}
\newtheorem{remark}[theorem]{Remark}

\newcommand{\N}{\mathbb{N}}

\newcommand{\Q}{\mathbb{Q}}
\newcommand{\Z}{\mathbb{Z}}
\def\k{\mathbb{K}}
\renewcommand{\mod}{ \mathrm{mod}\,}
\newcommand{\tZ}{\mathrm{Z}}
\newcommand{\CaC}{\mathcal{C}}
\newcommand{\CaD}{\mathcal{D}}

\title{On $p$-Frobenius of affine semigroups}
\author{Evelia R. Garc\'{\i}a Barroso, J. I. Garc\'{\i}a-Garc\'{\i}a, \\Luis J. Santana S\'anchez and Alberto Vigneron-Tenorio}
\date{}

\definecolor{darkgreen}{RGB}{6, 66, 22}

\begin{document}

\maketitle
\begin{abstract}
The aim of this paper is to study the $p$-Frobenius vector of affine semigroups $S\subset \mathbb N^q$; that is, the maximum element, with respect to a graded monomial order, with at most $p$ factorizations in $S$. We produce several algorithms to compute these vectors. Finally, we study how the $p$-Frobenius vectors behave when considering gluings of $S$ with $\mathbb N^q$.
\end{abstract}

\section*{Introduction}

An affine semigroup $S\subset \N^q$ is a set containing $0$ and closed under addition. A finite set $A=\{a_1,\ldots ,a_h\}\subset \N^q$ is a generating set of $S$ if $S=\big\{\sum_{i=1}^h \lambda_i a_i\mid \lambda_1,\ldots ,\lambda_h\in \N \big\}$. It is called a minimal generating set if it is the minimal set, according to inclusion, generating $S$. In this work, $S=\langle A\rangle$ means that $A$ is the minimal set of generators of $S$. In what follows, when we talk about an affine semigroup, we must understand a finitely generated affine semigroup.

Let $S=\langle A\rangle$ and $n\in \N^q$, the set $\tZ_n(S)$ denotes $\{\lambda=(\lambda_1,\ldots ,\lambda_h)\in \N^h\mid  n=\sum_{i=1}^h \lambda_i a_i\}$. The minimum integer cone containing $S$ is $\CaC(S)=\big\{\sum_{i=1}^h \lambda_i a_i\mid \lambda_1,\ldots ,\lambda_h\in \Q_{\ge 0}\big\}\cap \N^q$.
We say that $S$ is a $\CaC$-semigroup if $\CaC(S)\backslash S$ is a finite set. For $q=1$, $S$ is called a numerical semigroup when $\N\setminus S$ is finite (equivalently, $\gcd(a_1,\ldots ,a_h)=1$).

An important invariant related to numerical semigroups is the Frobenius number, defined as the maximum element in $\Z\setminus S$, that is, the largest integer that cannot be written as a positive linear combination of the minimal generators of $S$. Observe that $f$ is the Frobenius number of $S$ if and only if $f$ is the maximum integer satisfying $\tZ_f(S)=\emptyset$. Thus, one may naturally extend this definition to affine semigroups and call the Frobenius vector, the {\it maximum} (for a fixed monomial order $\preceq$) integer vector satisfying $\tZ_f(S)=\emptyset$. However, this maximum element might not exist for several reasons. The worst case arises when $\{f\in \CaC(S)\mid \tZ_f(S)=\emptyset\}$ is not finite. Nevertheless, when $\CaC(S)\setminus S$ is finite, this {\it maximum} integer vector can be set by $\max_{\preceq} (\CaC(S)\setminus S)$ for the fixed  monomial order $\preceq$ (\cite{Csemigroups}). In \cite{SL22}, the possible Frobenius vectors for an affine semigroup $S$ such that $\CaC(S)=\N^q$ and $\CaC(S)\setminus S$ finite are studied.

The first generalization of the Frobenius number appeared in \cite{BeckRobins}, but later many other generalizations of the Frobenius number/vector have been introduced. For a numerical semigroup $S$, the most usual definitions of generalized Frobenius number (called $p$-Frobenius number) are: the largest integer $n\in \N$ such that $\#
\tZ_n(S)=p$ (see \cite{BrownDannenbergFoxHannaKeckMooreRobbinsSamplesStankewicz}), or the largest integer $n\in \N$ such that $\# \tZ_n(S)\le p$ (see \cite{KomatsuYing} and references therein). These definitions are also used for affine semigroups. In particular,  a $p$-Frobenius integer number associated with an affine semigroup was introduced in \cite{AlievDeLoeraLouveaux}.

In this work, for an affine semigroup $S$, we introduce the concept of $p$-Frobenius vector of $S$ (respect to a {\it graded} monomial order $\preceq$), which is defined as $F_0(S)=\max_\preceq\{\CaC(S)\setminus S\}$, and $F_p(S)=\max_\preceq\{n\in \CaC(S)\mid 0< \sharp \tZ_n(S)\le p\}$, for $p>0$. When the set defining $F_0(S)$ is not finite, we set $F_0(S)=(\infty,\ldots, \infty)$. Similarly, for $p>0$, $F_p(S)=(\infty,\ldots, \infty)$ when its defining set is not finite. One of the goals of our work is to characterize when $F_p(S)$ is finite (Theorem \ref{existenciaFp}), and, as a consequence, to provide an algorithm to compute the $p$-Frobenius vector from the minimal generating set of any affine semigroup (Algorithm \ref{computeFp}). Moreover, we give two improved algorithms for the cases $p=1$ and $p=2$. The case $p=0$ was solved in \cite{DiazGarciaMarinVigneron}. 
In that paper, the authors characterize the affine semigroups $S$ such that $\CaC(S)\setminus S$ is finite, and an algorithm to compute its gap sets is introduced. For both results, only a generating set of $S$ is required.

The other target of this paper is related to the gluing of semigroups. The {\it gluing} of numerical semigroups is a concept born from the study of complete intersection numerical semigroups (see \cite{Delorme}, \cite[Chapter 8]{R-GS} and references therein). This concept can be generalized to general affine semigroups (see \cite{AssiGarciaOjeda} and \cite{GarciaMorenoVigneron}). We consider the gluing of an affine semigroup with $\N^q$: given the affine semigroup $S =\langle a_1,\ldots,a_{h}\rangle\subset\mathbb N^q$, $d\in \N$ and $\gamma\in S\setminus \{a_1,\ldots,a_{h}\}$ with $d$ and $\gcd(\gamma_1,\ldots,\gamma_q)$ coprime, $S\oplus_{d,\gamma} \mathbb N^q$ is the affine semigroup minimally generated by $\{ da_1,\ldots, da_{h},\gamma\}$. We say that the semigroup $S\oplus_{d,\gamma} \mathbb N^q$ is an $\N^q$-gluing (affine) semigroup. An interesting paper on when two affine semigroups can be glued is \cite{GS2020}. In this context, it is well-known that, the Frobenius number of a numerical semigroup generated by two coprime elements $\{a,b\}$ is $(a-1)(b-1)$ \cite{Sylvester}), and it also exists an exact formula for the Frobenius number when the semigroup is generated by three elements (\cite{Tripathi}). Moreover, for numerical semigroups, the Frobenius number of $S\oplus_{d,\gamma} \mathbb N$ is determined by $dF_0(S)+(d-1)\gamma$ (\cite[Proposition 10]{Delorme}). In this work, we study some properties of $p$-Frobenius vector of $S'=S\oplus_{d,\gamma} \mathbb N^q$, and determine an explicit way to obtain it from the $p$-Frobenius vector of $S$ under certain conditions. If such conditions do not hold, an upper bound of $F_p(S')$ is provided. 

The content of this work is organized as follows: in Section \ref{sec_prelimiraries} we introduce the necessary background for the correct understanding of the work. In Section \ref{CalculandoFp} we characterize when the $p$-Frobenius vector is finite for any affine semigroup $S$, and we establish an algorithm to compute it. Sections \ref{F1} and \ref{F2} are devoted to improve the previous algorithm for $p=1,2$, respectively. In the last section (Section \ref{gluing}) we study the $p$-Frobenius vector of the semigroup obtained from the gluing of an affine semigroup with $\N^q$. The results of this work are illustrated with several examples.

\section{Preliminaries}\label{sec_prelimiraries}

For any $n\in \N\backslash \{0\}$, $[n]$ denotes the set $\{1,\ldots n\}$.

Given the minimal generating set $\{ a_1,\ldots,a_{h}\}$ of an affine semigroup $S$, and a field $\k$, we can consider the $S$-graded polynomial ring $\k[x_1,\ldots,x_h]$ where the $S$-degree of a monomial $X^\alpha= x_1^{\alpha_1}\cdots x_h^{\alpha_h}$ is $\sum _{i=1}^h\alpha_i a_i$. In this polynomial ring, we define the $S$-homogeneous polynomial ideal $I_S \subset \k[x_1,\ldots,x_h] $ as 
\begin{equation}\label{idealS}
I_S=\left\langle x_1^{\alpha_1}\cdots x_h^{\alpha_h}- x_1^{\beta_1}\cdots x_h^{\beta_h} \mid \sum _{i=1}^h\alpha_i a_i = \sum _{i=1}^h\beta_i a_i\right\rangle.
\end{equation}
This ideal is usually called the semigroup ideal of $S$. It is well-known (see \cite{Herzog}) that (pure) binomials finitely generate this ideal, and there exist some minimal generating sets with respect to inclusion.

In this work, we use several concepts and tools related to computational algebra. The reader can find the necessary background in \cite{CoxLittleOShea}, here we collect the essential definitions and properties to improve his reading.

Let $\preceq$ be a monomial order on $\k [x_1,\ldots, x_h]$, that is, a multiplicative total order on the set of monomials satisfying that for any two monomials $X^\alpha, X^\beta$ with $X^\alpha\prec X^\beta$, then $X^\alpha X^\gamma \prec X^\beta X^\gamma$ for every monomial $X^\gamma$. Given an ideal $I\subseteq \k [x_1,\ldots, x_h]$, we denote by $\mathrm{In}_\prec(I)$  the set of leading terms of non-zero elements of $I$, and $\langle \mathrm{In}_\prec(I)\rangle$ the monomial ideal generated by $\mathrm{In}_\prec(I)$. A finite subset $G$ of $I$ is a Gr\"{o}bner basis of $I$ if $\langle \mathrm{In}_\prec(I)\rangle = \langle \{ \mathrm{In}_\prec(g) \mid g\in G\}\rangle$, where $\mathrm{In}_\prec(g)$ is the leading term of $g$. A Gröbner basis is reduced if all its polynomials are monic and irreducible by its other polynomials. This reduced basis is unique for each order. An algorithm for computing the (reduced) Gr\"{o}bner bases for $I$ is given in \cite[Chapter~2, $\S$7]{CoxLittleOShea}. It is also well-known that Gr\"{o}bner bases of monomial (resp. binomial) ideals are sets of monomials (resp. binomials).

Given a monomial order $\preceq$ and a Gröbner basis $G$, we denote by $\texttt{NormalForm}_{\preceq}(f,G)$ the remainder of the division of $f\in \k[x_1,\ldots ,x_h]$ by $G$ with respect  to $\preceq$. Since $G$ is a Gröbner basis, $\texttt{NormalForm}_{\preceq}(f,G)$ is unique (see \cite[Chapter 2, $\S$6, Proposition 1]{CoxLittleOShea}).
Taking into account the definition of $F_p(S)$ given in the introduction, we not only assume that the order is a monomial order but a graded one as well. That is, the monomials are first compared by total degree, with ties broken by one other order (see \cite[Chapter 8, $\S$4, Proposition 1]{CoxLittleOShea}). This is to avoid the following contradictory cases: for instance, consider the affine semigroup $S$ generated by $\{(0,1),(1,1),(2,0),(3,0)\}$ and the lexicographical order $\preceq_{lex}$; we have that $\alpha(0,1)$ has a unique writing for any $\alpha \in \N$, meaning that $\{n\in \CaC(S)\mid \sharp \tZ_n(S) =1\}$ is an infinite set so $F_1(S)=(\infty, \infty)$ according to definition; nevertheless, it has a maximum with respect to $\preceq_{lex}$, which is $(7,0)$. Indeed, {\it graded} monomial orders satisfy that $\sharp \{s\in S\mid s\preceq a\}<\infty$ for every $a\in S$.

\section{Computing $F_p(S)$ }\label{CalculandoFp}

Consider $S\subset \N^q$ an affine semigroup and $\preceq$ a graded monomial ordering on $\N^q$. 

In this section, we provide an algorithm to compute $F_p(S)$  for any $p\in \N$. Recall that to solve the problem for $p=0$, you can use the results appearing in \cite{DiazGarciaMarinVigneron}. The following result is the key to obtain such an algorithm for $p\ge 1$. Furthermore, it characterizes when $F_p(S)\in \N^q$.

\begin{theorem}\label{existenciaFp}
Let $S=\langle a_1,\dots,a_h\rangle\subset \N^q $ be an affine semigroup, $p\in\N\setminus\{0\}$, and $\preceq$ a graded monomial ordering on $\N^q$.
Then, $F_p(S)\neq (\infty,\ldots ,\infty)$ if and only if, for every $k\in [h]$, there exist $\lambda_k, \alpha_{k,i} \in \N$ such that $\lambda_k a_k=\sum_{i=1,i\neq k}^{h}\alpha_{k,i}a_i$.

\end{theorem}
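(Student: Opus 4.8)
The plan is to reformulate the statement as a finiteness question. Since $\preceq$ is graded, it satisfies $\sharp\{s\in S\mid s\preceq a\}<\infty$ for every $a$, so the set $T_p:=\{n\in\CaC(S)\mid 0<\sharp\tZ_n(S)\le p\}$ admits a $\preceq$-maximum if and only if it is finite; hence $F_p(S)\neq(\infty,\dots,\infty)$ is equivalent to $T_p$ being finite. Note also that $\sharp\tZ_n(S)>0$ forces $n\in S$, so $T_p=\{n\in S\mid 1\le\sharp\tZ_n(S)\le p\}$, and the arithmetic condition in the statement says exactly that for every $k$ a \emph{positive} multiple $\lambda_k a_k$ (that is, with $\lambda_k\ge 1$) lies in $\langle a_i\mid i\neq k\rangle$. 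I would prove the two implications separately. Throughout, write $|v|$ for the sum of the coordinates of a vector $v$; for $n\in S$ this is the total degree used by $\preceq$.

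\emph{Relations exist $\Rightarrow T_p$ finite.} The engine is that each relation $\lambda_k a_k=\sum_{i\neq k}\alpha_{k,i}a_i$ lets us manufacture factorizations: if $\mu\in\tZ_n(S)$ has $\mu_k\ge\lambda_k$, then subtracting $\lambda_k$ from the $k$-th entry and adding $\alpha_{k,i}$ to the others produces a new element of $\tZ_n(S)$ (the $S$-degree is preserved, precisely because $\lambda_k a_k=\sum_{i\neq k}\alpha_{k,i}a_i$) with a strictly smaller $k$-th coordinate. Iterating this for a fixed $k$ yields $\lfloor\mu_k/\lambda_k\rfloor+1$ factorizations of $n$, pairwise distinct because their $k$-th coordinates differ, so $\sharp\tZ_n(S)\ge\lfloor\mu_k/\lambda_k\rfloor+1$. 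Now I would turn large total degree into a large coordinate: with $M=\max_i|a_i|$ and any $\mu\in\tZ_n(S)$ one has $|n|=\sum_i\mu_i|a_i|\le M\sum_i\mu_i$, so some coordinate satisfies $\mu_k\ge |n|/(hM)$. Choosing that $k$ and setting $\Lambda=\max_k\lambda_k$ gives $\sharp\tZ_n(S)\ge |n|/(hM\Lambda)$. Consequently $\sharp\tZ_n(S)\le p$ forces $|n|\le p\,hM\Lambda$, and since only finitely many vectors of $\N^q$ have bounded total degree, $T_p$ is finite.

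\emph{Some relation fails $\Rightarrow T_p$ infinite.} Arguing by contraposition, suppose that for some index, say $k=1$, no positive multiple of $a_1$ lies in $\langle a_2,\dots,a_h\rangle$. I claim every $m\ge 1$ gives $\sharp\tZ_{ma_1}(S)=1$. Indeed, if $\mu\in\tZ_{ma_1}(S)$ then $\sum_{i\ge 2}\mu_i a_i=(m-\mu_1)a_1$; since $a_1\in\N^q\setminus\{0\}$ the right-hand side lies in $\N^q$ only if $\mu_1\le m$, and $\mu_1<m$ would exhibit the positive multiple $(m-\mu_1)a_1$ inside $\langle a_2,\dots,a_h\rangle$, contradicting the hypothesis. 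Hence $\mu_1=m$ and all other entries vanish, so the only factorization is $(m,0,\dots,0)$. Therefore $\{ma_1\mid m\ge 1\}\subseteq T_p$; this is an infinite, $\preceq$-increasing chain (as $|ma_1|=m|a_1|\to\infty$ and $\preceq$ is graded), so it has no $\preceq$-maximum and $F_p(S)=(\infty,\dots,\infty)$.

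I expect the necessity direction to be a short rigidity observation once the ray $\N a_1$ is singled out, so the main work sits in the sufficiency direction. The delicate points there are verifying that the iterated substitutions never leave $\N^h$ and really produce distinct factorizations (this is guaranteed by tracking the strictly decreasing $k$-th coordinate), and making the counting uniform in $n$ by passing through the averaging bound $\max_k\mu_k\ge |n|/(hM)$ so that the final estimate depends only on the fixed data $h$, $M$ and $\Lambda$. The one conceptual step to flag is the reduction in the first paragraph, where the graded hypothesis is used to convert ``existence of a $\preceq$-maximum'' into plain finiteness of $T_p$; without it the equivalence could fail, as the $F_1$ example in the preliminaries shows.
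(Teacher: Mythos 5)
Your proposal is correct and follows essentially the same route as the paper: the necessity direction is the paper's contrapositive argument that the multiples $\lambda a_1$ admit only the trivial factorization (you usefully spell out why no mixed factorization can occur, and rightly read the condition as requiring $\lambda_k\ge 1$, which the paper uses implicitly), and the sufficiency direction relies on the same substitution engine that turns a coordinate $\mu_k\ge p\lambda_k$ into $p+1$ distinct factorizations. The only cosmetic difference is the finiteness packaging: you bound the total degree $|n|\le phM\Lambda$ by an averaging argument, whereas the paper confines all elements with at most $p$ factorizations to the finite set $\CaD(\Lambda,p)$.
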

\begin{proof}
Without loss of generalization, assume that for $k=1$, for every $\lambda\in\N$ the element $\lambda a_1$ cannot be expressed using only the generators $\{a_2,\dots,a_h\}$. This implies that $Z_{\lambda a_1}(S)=\{(\lambda,0,\dots,0)\}$ (the only expression of $\lambda a_1$ is itself). Therefore, $\# Z_{\lambda a_1}(S)=1$ for every $\lambda \in \N$, and thus $F_p(S)=(\infty,\ldots,\infty)$.

Conversely, let $b=\sum_{i=1}^{h}\mu_i a_i\in S$ such that there exists $k\in [h]$ with $\mu_k\geq p \lambda_k$, assume $k=1$. We have that $\mu_1=p\lambda_1+d$ with $d\in \N$, and 
    \[\begin{split}
    b=(p\lambda_1+d)a_1+\sum_{i=2}^{h}\mu_i a_i=((p-1)\lambda_1+d)a_1+\sum_{i=2}^{h}\mu_i a_i+\sum_{i=2}^{h}\alpha_{1,i}a_i=\\
    ((p-2)\lambda_1+d)a_1+\sum_{i=2}^{h}\mu_ia_i+\sum_{i=2}^{h}2\alpha_{1,i}a_i=\\
    \dots\\
    da_1+\sum_{i=2}^{h}\mu_ia_i+\sum_{i=2}^{h}p\alpha_{1,i}a_i.
    \end{split}\]
Obtaining in this way, $p+1$ different factorizations of $b$. Thus, all the elements with at most $p$ factorizations are in the bounded set $\{\sum_{i=1}^h \gamma_i a_i \mid \gamma_i\in \N,~\gamma_i\leq p\lambda_i \}$, and therefore $F_p(S)\in \N^q$.
\end{proof}

\begin{remark} As a consequence of Theorem \ref{existenciaFp}, we see that the finiteness of $F_p(S)$ is independent of $p$ and the graded monomial order considered. In particular, if $F_1(S)$ is finite, then $F_p(S)$ is as well for any $p\geq 1$.

\end{remark}

\begin{corollary}
Let $S=\langle a_1,\dots,a_h\rangle\subset \N^q $ be an affine semigroup, $p\in\N\setminus\{0\}$, and $\preceq$ a graded monomial ordering on $\N^q$.
Then, $F_p(S)\neq (\infty,\ldots ,\infty)$ if and only if every extremal ray $\tau$ of $\CaC(S)$ contains at least two minimal generators of $S$.

\end{corollary}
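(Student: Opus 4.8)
The plan is to reduce everything to the algebraic criterion already established in Theorem \ref{existenciaFp} and then to translate that criterion into the geometric language of extremal rays. By Theorem \ref{existenciaFp}, $F_p(S)\neq(\infty,\dots,\infty)$ holds exactly when, for every $k\in[h]$, there are a \emph{positive} integer $\lambda_k$ and nonnegative integers $\alpha_{k,i}$ with $\lambda_k a_k=\sum_{i\neq k}\alpha_{k,i}a_i$ (the trivial solution $\lambda_k=0$ being excluded, as in the forward direction of that theorem). Thus the whole task is to show that this family of integral relations holds for all $k$ if and only if every extremal ray of $\CaC(S)$ carries at least two minimal generators.

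First I would pass from integers to the rational cone. Writing $P=\big\{\sum_{i=1}^h c_i a_i : c_i\in\Q_{\ge 0}\big\}$ for the rational cone whose intersection with $\N^q$ is $\CaC(S)$ (so its extremal rays are those of $\CaC(S)$), the relation $\lambda_k a_k=\sum_{i\neq k}\alpha_{k,i}a_i$ with $\lambda_k\ge 1$ is, after dividing by $\lambda_k$ (respectively clearing denominators), equivalent to $a_k\in P_k:=\big\{\sum_{i\neq k}c_i a_i: c_i\in\Q_{\ge 0}\big\}$. Since every minimal generator is a nonzero lattice point, this rational reformulation is faithful, and the criterion of Theorem \ref{existenciaFp} reads: $a_k\in P_k$ for every $k$.

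The core of the argument is then the geometric lemma: for each $k$, $a_k\in P_k$ if and only if $a_k$ is \emph{not} the unique minimal generator lying on an extremal ray of $P$. For the ``only if'' direction I would use that an extremal ray $\tau$ is a one-dimensional face of $P$: if $a_k$ were the only generator on $\tau$ and yet $a_k=\sum_{i\neq k}c_i a_i$ with $c_i\ge 0$, the face property forces every $a_i$ with $c_i>0$ to lie in $\tau$; as no other generator does, all $c_i=0$ and $a_k=0$, which is impossible. For the converse I would split into two cases. If $a_k$ shares its ray with some $a_j$, $j\neq k$, then $a_j=c\,a_k$ with $c>0$ and hence $a_k=\tfrac1c a_j\in P_k$. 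If instead $a_k$ lies on no extremal ray, then, since $P\subseteq\Q_{\ge 0}^q$ is pointed and therefore generated by its extremal rays (each spanned by a generator other than $a_k$), we get $P=P_k$ and again $a_k\in P_k$.

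Finally, combining the lemma with Theorem \ref{existenciaFp}: $F_p(S)$ is finite if and only if $a_k\in P_k$ for all $k$, if and only if no minimal generator is alone on an extremal ray, if and only if (using that each extremal ray of $P$ is spanned by at least one generator) every extremal ray carries at least two minimal generators. I expect the only delicate point to be the geometric lemma, and specifically the case of several collinear minimal generators, which can indeed occur in $\N^q$ and is exactly what makes ``two generators on a ray'' (rather than merely ``a generator on each ray'') the correct condition; by contrast, the face/extremality argument and the passage from integral to rational relations are routine.
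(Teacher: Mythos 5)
Your proof is correct and follows essentially the same route as the paper's: reduce to the criterion of Theorem \ref{existenciaFp}, then split on whether $a_k$ lies on an extremal ray (using a second collinear generator there, and the fact that the pointed cone is generated by its extremal rays otherwise). The only difference is one of detail: you make explicit the face-property argument for the forward direction and the integral-to-rational passage, both of which the paper's proof asserts without elaboration.
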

\begin{proof}
If there is an extremal ray that contains only one minimal generator, then that generator cannot be written as a combination of the other minimal generators. Thus, by Theorem \ref{existenciaFp}, $F_p(S)$ is not finite.

Assume now that, every extremal ray $\tau$ of $\CaC(S)$ contains at least two minimal generators of $S$. Let $k \in [h]$, by Theorem \ref{existenciaFp}, it is enough to prove that a multiple of $a_k$ is a combination of the other minimal generators to conclude. If $a_k$ is not in any extremal ray of $\CaC(S)$ then there exists $\lambda_k\in\N$ such that $\lambda_k a_k$ can be expressed using only the generators of $S$ belonging to the extremal rays of $\CaC(S)$. Otherwise, if $a_k$ is in an extremal ray $\tau$ of $\CaC(S)$, by hypothesis, there exists another minimal generator $a_j \in \tau$, thus $a_k = \alpha a_j$ for some $\alpha \in \Q$. Hence, there exist $\beta, \gamma \in \N$ such that $\beta a_k = \gamma a_j$. 
\end{proof}

Given a $h$-tuple $\Lambda=(\lambda_1,\ldots,\lambda_h) \in \N^h$, once obtained the set,
\begin{equation}\label{Dset}
\CaD(\Lambda,p):=\left\{\sum_{i=1}^h \gamma_i a_i \mid \gamma_i\in \N,~\gamma_i\leq p\lambda_i \right\},
\end{equation}
the algorithm to compute $F_p(S)$ is straightforward from the proof of Theorem \ref{existenciaFp}. 

Algorithm \ref{computeFp} computes $F_p(S)$. Note that, if Theorem \ref{existenciaFp} holds, then any Gröbner basis of the ideal $I_S\subset \k[x_1,\ldots ,x_h]$ associated with $S$ contains a binomial with a monomial like $x_k^{\alpha_k}$, for all $k\in[h]$. Moreover, in this procedure, the elements $\lambda_k$ obtained are the smallest elements satisfying $\lambda_k a_k=\sum_{i=1,i\neq k}^{h}\alpha_ia_i$.

\begin{algorithm}[H]
\caption{Computation of $F_p(S)$.}\label{computeFp}
\KwIn{A minimal system of generators $\{a_1,\dots,a_h\}$ of $S$ and $p\in \N$.}
\KwOut{$F_p(S)$.}
\If{$p=0$}{
    \If{$S$ is a numerical semigroup}{ 
        \Return{The Frobenius number of $S$}
    }
    \If{$\CaC(S)\setminus S$ is finite}{
        \Return{The Frobenius vector of $S$}
    }
    \If{$\CaC(S)\setminus S$ is not finite}{
        \Return{$(\infty,\dots,\infty)$}
    }
}
\If{there is an extremal ray of $\CaC(S)$ with only one minimal generator of $S$}
    {\Return{$F_p(S)=(\infty,\dots,\infty)$} }
${\cal B}\leftarrow $ a (reduced) Gröbner basis of $I_S$\;

$\Lambda\leftarrow (\lambda_1,\ldots , \lambda_h)\in \N^h$ such that $x_k^{\lambda_k}$ is a monomial of a binomial in ${\cal B}$\;
$D\leftarrow \CaD(\Lambda,p)$\;

\Return{$F_p(S)=\max_\preceq\{n\in D\mid 0<\# \tZ_n(S)\le p\}$}
\end{algorithm}

\begin{example}\label{ex1}
    Let $S$ be the affine semigroup generated by the elements of the set $A=\{(3, 0), (4, 0), (0, 5), (0, 6), (1, 1)\}$ and consider the graded lexicographic order the monomial ordering used in this example. The set $A$ is a minimal generating set of $S$ and the ideal of the semigroup is generated by
    \begin{multline*}
        \Big\{x_3^6-x_4^5,x_2 x_4^4-x_3^4 x_5^4,x_2 x_3^2-x_4 x_5^4,
        x_2^2 x_4^3-x_3^2 x_5^8,\\
        x_2^3 x_4^2-x_5^{12},
        x_1 x_5^5-x_2^2 x_3,
        x_1 x_4^3-x_3^3 x_5^3,x_1 x_3 x_5-x_2 x_4,x_1 x_3^3-x_4^2 x_5^3,\\
        x_1 x_2 x_4^2-x_3 x_5^7,
        x_1 x_2^2 x_3 x_4-x_5^{11},x_1^2 x_4-x_5^6,x_1^3 x_3-x_2 x_5^5,x_1^4-x_2^3\Big\}.
    \end{multline*}
    Note that the monomials $x_1^4$, $x_2^3$, $x_3^6$, $x_4^5$ and $x_5^{11}$ appear in the above set. Thus $\Lambda=(4,3,6,5,11)$. The set $\CaD(\Lambda,1)$ is equal to 
    \[\{ (3\gamma_1 +4\gamma_2+\gamma_5,5\gamma_3+6\gamma_4+\gamma_5)\mid (0,0,0,0,0)\leq \gamma\leq (4,3,6,5,11)\},\]
    containing $1835$ elements. Ordering this set with respect to the fixed monomial order, the greatest element $n$ having $\#Z_n(S)$ equal to $1$ is $(21,4)$ and therefore $F_1(S)=(21,4)$. 
    
\end{example}

\section{Computing the 1-Frobenius vector from Gröbner basis of $I_S$}\label{F1}

Fix a graded monomial order $\preceq$ in $\N ^q$ and an affine semigroup $S\subset \N ^q$ minimally generated by $\{a_1,\dots,a_h\}$, Algorithm \ref{computeFp} allows us to obtain $F_p(S)$, but it involves computing the sets $\tZ_m(S)$ for $m\in \CaD(\Lambda,p)$, and $\CaD(\Lambda,p)$ could be very large. In this section, we introduce an algorithm to improve the computation of $F_1(S)$ by using some computational algebra tools.

In general, given any monomial $X^\alpha\in \k[x_1,\ldots ,x_h]$ with $S$-degree $m$, and any Gröbner basis ${\cal B}$ of $I_S$ with respect to a monomial order $\preceq'$ $\texttt{NormalForm}_{\preceq'}(X^\alpha,{\cal B})\neq X^\alpha$ implies that $\# \tZ_m(S)> 1$. Indeed, since the elements of $\cal B$ are binomials, then $\texttt{NormalForm}_{\preceq'}(X^\alpha,{\cal B})$ is a monomial, say, $X^\beta$; thus, $X^\alpha - X^\beta \in I_S$, meaning that they have the same $S$-degree, so $\alpha, \beta \in \tZ_m(S)$. Hence, if you consider the following set (equivalent to $\CaD(\Lambda,1)$),
\[ \CaD'(\Lambda)=\left\{(\gamma_1,\ldots ,\gamma_h)\mid \gamma_i\in \N,~\gamma_i\leq \lambda_i \right\},\]
the elements $m_\gamma=\sum_{i=1}^h\gamma_ia_i\in S$ with $\gamma\in \CaD'(\Lambda)$ and such that $\# \tZ_{m_\gamma}(S)=1$ satisfy $\texttt{NormalForm}_{\preceq'}(X^\gamma,{\cal B})= X^\gamma$. This fact can be used to improve Algorithm \ref{computeFp} for $p=1$.

The following lemma also improves Algorithm \ref{computeFp} for $p=1$. Consider that ${\cal B}=\{X^{u_1}-X^{v_1},\ldots ,X^{u_t}-X^{v_t}\}$ is the reduced Gröbner basis of $I_S$ respect to $\preceq'$, and let $I_v\subset \k[x_1,\ldots ,x_h]$ be the monomial ideal generated by $\{X^{v_1},\ldots ,X^{v_t}\}$. We assume the leading term of $X^{u_i}-X^{v_i}$ is $X^{u_i}$ for any $i$.

\begin{lemma}
Let $\alpha=(\alpha_1, \ldots, \alpha_h) \in \CaD'(\Lambda)$ such that $\texttt{NormalForm}_{\preceq'}(X^\alpha,{\cal B})=X^\alpha$. Then, $\# \tZ_{\sum_{i=1}^h\alpha_i a_i}(S)>1$ if and only if $X^\alpha\in I_v$.
\end{lemma}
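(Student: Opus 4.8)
The plan is to prove the two implications separately, and the whole argument rests on one structural fact already recorded in the excerpt: because every element of $\mathcal{B}$ is a binomial, the reduction of a single monomial modulo $\mathcal{B}$ proceeds through a chain of monomials, each step rewriting a monomial $X^{u_j}X^\zeta$ as $X^{v_j}X^\zeta$. I would keep this rewriting picture in mind throughout, writing $m=\sum_{i=1}^h\alpha_i a_i$ for the $S$-degree in question.

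For the implication from right to left, assume $X^\alpha\in I_v$; the goal is to produce a second factorization. Since $I_v$ is a monomial ideal, $X^{v_i}\mid X^\alpha$ for some $i$, say $X^\alpha=X^{v_i}X^\delta$ with $\delta\in\N^h$. Multiplying the generator $X^{u_i}-X^{v_i}\in I_S$ by $X^\delta$ gives $X^{u_i+\delta}-X^\alpha\in I_S$, so $u_i+\delta$ and $\alpha$ both lie in $\tZ_m(S)$. They are distinct, because $u_i\neq v_i$ for a nonzero binomial forces $u_i+\delta\neq v_i+\delta=\alpha$; hence $\#\tZ_m(S)\geq 2$. Note that this direction does not use the hypothesis that $X^\alpha$ is in normal form.

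For the converse I would use that hypothesis decisively. Suppose $\#\tZ_m(S)>1$, so there is a monomial $X^\beta$ with the same $S$-degree as $X^\alpha$ and $\beta\neq\alpha$; then $X^\alpha-X^\beta\in I_S$. Because the normal form is constant on cosets of $I_S$ and $\texttt{NormalForm}_{\preceq'}(X^\alpha,\mathcal{B})=X^\alpha$ by assumption, we obtain $\texttt{NormalForm}_{\preceq'}(X^\beta,\mathcal{B})=X^\alpha$. Since $\beta\neq\alpha$, the reduction of $X^\beta$ to $X^\alpha$ takes at least one step; inspecting its final rewrite, $X^\alpha$ arises as $X^{v_k}X^\zeta$ from some $X^{u_k}X^\zeta$, so $X^{v_k}\mid X^\alpha$ and therefore $X^\alpha\in I_v$.

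The only delicate point is the bookkeeping in the converse: one must be sure that the reduction chain really terminates at $X^\alpha$ and that its last step has $X^\alpha$ as its output, so that divisibility by the corresponding $X^{v_k}$ is forced. Both facts follow from $X^\alpha$ being standard (hence terminal in the reduction) together with $X^\beta\neq X^\alpha$ (hence the chain is nonempty). Everything else is a direct manipulation of the binomials of $\mathcal{B}$, so I expect no further obstacle.
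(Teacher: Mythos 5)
Your proof is correct and follows essentially the same route as the paper: the direction $X^\alpha\in I_v \Rightarrow \#\tZ_m(S)>1$ is verbatim the paper's argument (multiply $X^{u_i}-X^{v_i}$ by $X^\delta$ to exhibit the second factorization $u_i+\delta$), and your converse via normal-form invariance on cosets and the last step of the monomial rewriting chain is the same mechanism the paper invokes, written out more carefully --- the paper's terse claim that ``$X^\alpha$ has to be equal to $X^\gamma X^{v_i}$'' is precisely justified by your observation that the reduction of $X^\beta$ must terminate at $X^\alpha$ and its final rewrite forces $X^{v_k}\mid X^\alpha$.
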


\begin{proof}
Since $\texttt{NormalForm}_{\preceq'}(X^\alpha,{\cal B})=X^\alpha$, $X^\alpha$ is not the leading term of any binomial in $I_S$ respect to $\preceq'$.

Assume $\# \tZ_{\sum_{i=1}^h\alpha_i a_i}(S)>1$. Then, there is $\beta \in \N^h$ with $\sum_{i=1}^h\beta_i a_i=\sum_{i=1}^h\alpha_i a_i$, and $\beta\neq \alpha$, that is, $X^\beta-X^\alpha\in I_S$. Hence, there exist $f_1,\ldots ,f_t\in \k[x_1,\ldots ,x_h]$ such that $X^\beta - X^\alpha = \sum _{i=1}^t f_i (X^{u_i}-X^{v_i})$. So, $X^\beta$ is the leading term of $X^\beta-X^\alpha$, and then, $X^\alpha$ has to be equal to $X^\gamma X^{v_i}$ for some $i\in [t]$ and $X^\gamma\in \k[x_1,\ldots ,x_h]$. We have that $X^\alpha\in I_v$.

Conversely, suppose $X^\alpha\in I_v$, so $X^\alpha$ is equal to $X^\beta X^{v_j}$ for some $j\in [t]$ and $X^\beta\in \k[x_1,\ldots ,x_h]$. Hence, $\sum_{i=1}^h\alpha_i a_i= \sum_{i=1}^h(\beta_i+v_{ji}) a_i$, where $v_j=(v_{j1},\ldots ,v_{jh})$. On the other hand, $X^{u_j}-X^{v_j}\in I_S$ and so $\sum_{i=1}^h(\beta_i+v_{ji}) a_i=\sum_{i=1}^h(\beta_i+u_{ji}) a_i $ with $u_j=(u_{j1},\ldots ,u_{jh})$. Thus $\alpha=\beta+v_j$ and $\beta + u_j$ are two different elements in $ \tZ_{\sum_{i=1}^h\alpha_i a_i}(S)$.
\end{proof}

\begin{algorithm}[H]
\caption{Improved computation of $F_1(S)$.}\label{computeF1}
\KwIn{A minimal system of generators $\{a_1,\dots,a_h\}$ of $S$.}
\KwOut{$F_1(S)$ }

\If{there is an extremal ray of $\CaC(S)$ with only one minimal generator of $S$}
    {\Return{$F_1(S)=(\infty,\dots,\infty)$.}}

${\cal B}\leftarrow $ a (reduced) Gröbner basis of $I_S$\;

$\Lambda\leftarrow (\lambda_1,\ldots , \lambda_h)\in \N^h$ such that $x_k^{\lambda_k}$ is a monomial of a binomial in ${\cal B}$\;

$D\leftarrow \{\gamma=(\gamma_1,\ldots ,\gamma_h)\in \CaD'(\Lambda)\mid \texttt{NormalForm}_{\preceq_k}(X^\gamma,{\cal B})=X^\gamma\}$\;
$D\leftarrow \{\gamma=(\gamma_1,\ldots ,\gamma_h)\in D\mid X^\gamma\notin I_v\}$\;

\Return{$F_1(S)=\max_\preceq\{\sum_{i=1}^h\gamma_i a_i\mid (\gamma_1,\ldots ,\gamma_h)\in D\}$}
\end{algorithm}

Note that the previous results mean that the set of elements $m\in S$ with $\# \tZ_{m}(S)=1$ corresponds to the set of monomials in $X^\alpha\in \k[x_1,\ldots ,x_h]$ such that $X^\alpha\notin \langle \mathrm{In}_\prec(I)\rangle+I_v$. Furthermore, the monomial ideal $\langle \mathrm{In}_\prec(I)\rangle+I_v$ does not depend on the fixed monomial order. This fact allows us to introduce an alternative algorithm to compute $F_1(S)$.

\begin{algorithm}[H]
    \caption{Improved (v2) computation of $F_1(S)$.}\label{computeF1-v2}
    \KwIn{A minimal system of generators $\{a_1,\dots,a_h\}$ of $S$.}
    \KwOut{$F_1(S)$.}
    \If{there is an extremal ray of $\CaC(S)$ with only one minimal generator of $S$}
    {\Return{$F_1(S)=(\infty,\dots,\infty)$.}}
    ${\cal B}\leftarrow$ a (reduced) Gröbner basis of $I_S$\;
    $\Omega\leftarrow \{\gamma \in\N^h \mid X^\gamma \text{ is a monomial of a binomial of }{\cal B}\}$\;
    $D \leftarrow \N^h\setminus \cup _{\gamma \in \Omega}(\gamma+ \N^h)$\;
    \Return{$F_1(S)=\max_\preceq\{\sum_{i=1}^{h}\gamma_ia_i\mid (\gamma_1,\dots,\gamma_h)\in D\}$}
\end{algorithm}

\begin{example}\label{ex2}
    Consider the same semigroup of Example \ref{ex1}. From the monomials of the Gröbner basis computed, we obtain the set of tuples
    \begin{multline*}
        \Omega=\{(0, 0, 0, 5, 0), (0, 0, 6, 0, 0), (0, 1, 0, 4, 0), 
        (0, 0, 4, 0, 4), (0, 1, 2, 0, 0), (0, 0, 0, 1, 4),\\
        (0, 2, 0, 3, 0), (0, 0, 2, 0, 8), (0, 3, 0, 2, 0), 
        (0, 0, 0, 0, 12), (0, 2, 1, 0, 0), (1, 0, 0, 0, 5),\\
        (1, 0, 0, 3, 0), (0, 0, 3, 0, 3), (0, 1, 0, 1, 0), 
        (1, 0, 1, 0, 1), (1, 0, 3, 0, 0), (0, 0, 0, 2, 3),\\
        (1, 1, 0, 2, 0), (0, 0, 1, 0, 7), (1, 2, 1, 1, 0),
        (0, 0, 0, 0, 11), (2, 0, 0, 1, 0), (0, 0, 0, 0, 6),\\ (3, 0, 1, 0, 0), (0, 1, 0, 0, 5), (0, 3, 0, 0, 0), 
        (4, 0, 0, 0, 0)\}
    \end{multline*}
    The set $ \{ \sum_{i=1}^5\alpha_i a_i | \alpha\in \N^5\setminus \cup _{\gamma \in \Omega}(\gamma + \N^5) \} $ has cardinality $179$ and its maximum with respect to the monomial order is $(21, 4)$. Thus $F_1(S)=(21,4)$, the same we obtained with Algorithm \ref{computeFp}.    
\end{example}

\section{2-Frobenius vector and indispensable binomials}\label{F2}

As in the previous sections, fix a graded monomial order $\preceq$ in $\N ^q$ and an affine semigroup $S$ minimally generated by $\{a_1,\dots,a_h\}$. It is well-known that all the minimal generating sets of the semigroup ideal $I_S$ have the same cardinality.  Moreover, these sets are characterized using simplicial complexes (see \cite{ComplejosSimpliciales}, and the references therein). Let $C_m\subset \k[x_1,\ldots ,x_h]$ be the set of monomials of $S$-degree $m\in S$. In \cite{TesisShalom}, it is introduced the simplicial complex $\nabla_m = \big\{ F \subseteq C_m \mid \gcd(F) \neq 1\big\},$ where $\gcd(F)$ denotes the greatest common divisor of the monomials in $F$, and $m$ belongs to $S$. Since $x_1^{\alpha_1}\cdots x_h^{\alpha_h}\in C_m$ if and only if $m=\sum_{i=1}^h \alpha_i a_i$, the vertex set of $\nabla_m$ consists of all the monomials of $S$-degree $m\in S$, which is equivalent to the set of all the ways of writing $m$ as a linear combination of the generators of $S$.

We have the characterization of the minimal generating sets of $I_S$,
\begin{theorem}(\cite{TesisShalom})
Let $\Lambda=\{X^{u_1}-X^{v_1},\ldots , X^{u_t}-X^{v_t}\}\subset I_S$, and $M=\{S\text{-degree}(X^{u_i})\mid i\in [t]\}$. Then, $\Lambda$ is a minimal generator set of $I_S$ if and only if,
\begin{enumerate}
    \item The simplicial complex $\nabla_{m}$ is non-connected for any $m\in M$.
    \item For any $m\in M$: 
    \begin{enumerate}
        \item The cardinality of $B_m$  is equal to the number of connected components of $\nabla_m$ minus one,
        \item the monomials $X^{u}$ and $X^{v}$ of any binomial $X^{u}-X^{v}\in B_m$  belongs to different connected components of $\nabla_m$,
        \item and there is at least a monomial of every connected component of $\nabla_m$,
    \end{enumerate}
    where $B_m$ is the set of binomials of $S$-degree $m$ in $\Lambda$.
\end{enumerate}
\end{theorem}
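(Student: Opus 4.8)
The backbone of the argument is a dictionary between the connectivity of $\nabla_m$ and reducibility of relations modulo strictly smaller $S$-degrees. I would first record that $\nabla_m$ and its $1$-skeleton have the same connected components, so I may work with the graph on the vertex set $C_m$ in which two monomials are joined when they share a variable. The Key Lemma I would prove is: for $X^a,X^b\in C_m$, the monomials $X^a$ and $X^b$ lie in the same connected component of $\nabla_m$ if and only if $X^a-X^b$ belongs to the ideal generated by the binomials of $I_S$ of $S$-degree strictly less than $m$. One implication is immediate, since a shared variable $x_k$ gives $X^a-X^b=x_k\big(X^{a-e_k}-X^{b-e_k}\big)$, a multiple of a lower-degree binomial, and one concatenates such steps along a path; the reverse implication is the delicate part and is handled by induction on $S$-degree, tracking how a representation of $X^a-X^b$ by lower-degree binomials forces a connecting chain.

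From the Key Lemma I would extract the homological content by graded Nakayama. Writing $\mathfrak{m}=\langle x_1,\dots,x_h\rangle$ and $c_m$ for the number of connected components of $\nabla_m$, the piece $(I_S)_m$ is spanned by the differences $X^a-X^b$ with $a,b\in C_m$ and has dimension $|C_m|-1$, while $(\mathfrak{m}I_S)_m$ is precisely the span of the within-component differences, of dimension $|C_m|-c_m$. Hence the minimal number of homogeneous generators of $I_S$ in degree $m$ is $\beta_{0,m}=c_m-1$, and there is a natural isomorphism $(I_S/\mathfrak{m}I_S)_m\cong \widetilde H_0(\nabla_m)$ sending the class of a binomial $X^u-X^v$ to the difference $[u]-[v]$ of its component classes. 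This reduces the whole theorem to one clean statement: $\Lambda$ is a minimal generating set if and only if, for every $m$, the classes $\{[u_i]-[v_i]\mid X^{u_i}-X^{v_i}\in B_m\}$ form a basis of $\widetilde H_0(\nabla_m)$.

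For the forward direction I would use uniqueness of graded Betti numbers: since $\Lambda$ is homogeneous and minimal, $|B_m|=\beta_{0,m}=c_m-1$, which is exactly condition (2a) and forces $c_m\ge 2$ whenever $B_m\neq\emptyset$, i.e. for every $m\in M$, giving condition (1); if some binomial had both monomials in one component, the Key Lemma would make it a lower-degree combination of the remaining generators, contradicting minimality and yielding (2b); and the generating classes, being a basis of $\widetilde H_0(\nabla_m)$, must meet every component, giving (2c). For the converse I would run graded Nakayama backwards: conditions (2a)--(2c) together with (1) say that in each degree $m\in M$ we have $c_m-1$ relations whose endpoints lie in distinct components and exhaust all components, which I would argue forces the associated edges to form a spanning tree of the $c_m$ components, equivalently that the classes $[u_i]-[v_i]$ are a basis of $\widetilde H_0(\nabla_m)$; spanning in every degree gives generation, and the exact count $|B_m|=c_m-1$ gives minimality.

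The main obstacle is twofold. The first and deepest point is the reverse implication of the Key Lemma, i.e. that reducibility modulo lower degrees genuinely forces a path in $\nabla_m$; this is where the combinatorics of the $S$-grading does all the work. The second, more delicate point is the passage from the local conditions (2a)--(2c) to the spanning-tree/basis statement: merely counting edges and touching every component is not by itself enough to guarantee connectivity, since one must rule out ``parallel'' relations joining the same pair of components, and this is precisely the content of demanding a \emph{basis} of $\widetilde H_0(\nabla_m)$ rather than an arbitrary generating collection. Making explicit why the hypotheses force this basis property is the crux of the sufficiency half.
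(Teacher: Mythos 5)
The paper does not prove this statement at all: it is quoted verbatim from Eliahou's thesis \cite{TesisShalom}, so there is no internal argument to compare yours against. Judged on its own merits, your skeleton is exactly the standard route in the literature (Eliahou, and later Briales--Campillo--Mariju\'an--Pis\'on and Ojeda--Vigneron-Tenorio): the Key Lemma identifying the same-component differences in degree $m$ with $(\mathfrak{m}I_S)_m$, the resulting isomorphism $(I_S/\mathfrak{m}I_S)_m\cong \widetilde{H}_0(\nabla_m)$, and graded Nakayama reducing everything to ``the classes of the binomials in $B_m$ form a basis.'' Your necessity direction is complete and correct on this basis. (One small simplification: the reverse implication of the Key Lemma is cleaner by induction on the number of terms in a representation $X^a-X^b=\sum_j M_j(X^{c_j}-X^{d_j})$ with $M_j\neq 1$, matching monomials and telescoping, rather than by induction on $S$-degree.)

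The genuine gap is the one you name yourself and then leave open: the passage from conditions (2a)--(2c) to the basis property. This cannot be ``argued to force a spanning tree'' from the stated hypotheses, because the implication already fails at the level of the quotient multigraph on the $c_m$ components: for $c_m\geq 4$, two parallel edges joining $K_1,K_2$ together with one edge joining $K_3,K_4$ give $c_m-1$ edges, no loops, and every component covered, yet a disconnected quotient graph, hence classes spanning only a proper subspace of $\widetilde{H}_0(\nabla_m)$. Nothing in your write-up excludes two distinct binomials of $B_m$ joining the same pair of components (possible whenever a component contains at least two monomials), so the sufficiency half of your proof is unfinished as it stands; the result in the source literature is stated with the stronger requirement that the binomials of $B_m$ \emph{connect} all the components of $\nabla_m$ (equivalently, the quotient graph is a spanning tree), and with that hypothesis your Nakayama argument closes immediately. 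A second, smaller omission in your converse: your hypotheses only constrain degrees $m\in M$, but generation also requires $(I_S/\mathfrak{m}I_S)_{m'}=0$, i.e.\ $\nabla_{m'}$ connected, for every $S$-degree $m'\notin M$; you should either add this as a hypothesis or note explicitly that it is implicit in the intended statement, since ``spanning in every degree gives generation'' has no content at degrees where $B_{m'}=\emptyset$.
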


From the study of the uniqueness of the minimal generating set of $I_S$, the definition of the indispensable binomial of $I_S$ arises. An indispensable binomial of $I_S$ is a binomial that appears  (up to a scalar multiple) in every generator set of $I_S$. Equivalently, $X^{\alpha}-X^{\beta}\in I_S$ is an indispensable binomial if and only if $\nabla_m=\big\{ \{X^{\alpha}\},\{X^{\beta}\}\big\}$, where $m$ is the $S$-degree of $X^\alpha$ and $X^\beta$ (see Corollary 7 in \cite{indispensables}).

\begin{lemma}
Let $S$ be an affine semigroup such that there exists $m\in S$ with $\sharp \tZ_m (S)=2$. Then, there is at least an indispensable binomial in $I_S$.
\end{lemma}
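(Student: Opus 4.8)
The plan is to start from the hypothesized $m\in S$ with $\sharp\tZ_m(S)=2$ and to manufacture from it a degree whose two factorizations are \emph{coprime}, so that the simplicial-complex characterization of indispensable binomials recalled just above applies. Write $\tZ_m(S)=\{\alpha,\beta\}$ with $\alpha\neq\beta$, and let $X^\alpha,X^\beta\in C_m$ be the two monomials of $S$-degree $m$. First I would set $X^\gamma=\gcd(X^\alpha,X^\beta)$ and write $\alpha=\gamma+\alpha'$, $\beta=\gamma+\beta'$, so that $\gcd(X^{\alpha'},X^{\beta'})=1$ and $\alpha'\neq\beta'$. Since $X^\alpha$ and $X^\beta$ share the $S$-degree $m$, cancelling the common factor $X^\gamma$ shows that $X^{\alpha'}$ and $X^{\beta'}$ share the $S$-degree $m':=m-w\in S$, where $w=\sum_{i=1}^h\gamma_i a_i$ is the $S$-degree of $X^\gamma$; in particular $X^{\alpha'}-X^{\beta'}\in I_S$.

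The crucial step is to verify that $m'$ has \emph{exactly} two factorizations, and this is precisely where the hypothesis $\sharp\tZ_m(S)=2$ (rather than merely $\geq 2$) is needed. Clearly $\alpha',\beta'\in\tZ_{m'}(S)$. Conversely, I would argue that any $\delta\in\tZ_{m'}(S)$ lifts to a factorization $\gamma+\delta$ of $m$, since its $S$-degree is $w+m'=m$; as $\tZ_m(S)=\{\alpha,\beta\}$, this forces $\gamma+\delta\in\{\alpha,\beta\}$, hence $\delta\in\{\alpha',\beta'\}$. Therefore $\tZ_{m'}(S)=\{\alpha',\beta'\}$ and $\sharp\tZ_{m'}(S)=2$. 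This is the only genuinely non-formal point of the argument: had we only assumed $\sharp\tZ_m(S)\geq 2$, the gcd-reduced degree $m'$ could acquire extra factorizations and the construction would fail to single out an indispensable binomial; the equality $\sharp\tZ_m(S)=2$ is exactly what guarantees no such extra $\delta$ can appear.

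Finally I would apply the characterization of indispensable binomials via $\nabla_{m'}$. The vertex set of $\nabla_{m'}$ consists of all monomials of $S$-degree $m'$, which by the previous step is exactly $\{X^{\alpha'},X^{\beta'}\}$; and since $\gcd(X^{\alpha'},X^{\beta'})=1$, the two-element face $\{X^{\alpha'},X^{\beta'}\}$ does not belong to $\nabla_{m'}$. Hence $\nabla_{m'}=\big\{\{X^{\alpha'}\},\{X^{\beta'}\}\big\}$, and by Corollary 7 in \cite{indispensables} the binomial $X^{\alpha'}-X^{\beta'}$ is an indispensable binomial of $I_S$, which proves the statement. Everything beyond the preservation of ``exactly two'' in the second paragraph is routine bookkeeping about $S$-degrees.
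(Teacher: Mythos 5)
Your proposal is correct and takes essentially the same route as the paper: the paper's proof distinguishes whether $\nabla_m$ consists of two isolated vertices or two vertices joined by an edge, and in the latter case produces the indispensable binomial $\gcd(X^{\alpha},X^{\beta})^{-1}\big(X^{\alpha}-X^{\beta}\big)$, exactly your gcd-reduction (your argument simply merges the two cases). The one difference is in your favor: your lifting argument $\delta\mapsto\gamma+\delta$ explicitly verifies that the reduced degree $m'$ still has exactly two factorizations, so that $\nabla_{m'}=\big\{\{X^{\alpha'}\},\{X^{\beta'}\}\big\}$ and Corollary 7 of \cite{indispensables} applies --- a step the paper's proof asserts without justification.
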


\begin{proof}
The equality $\sharp \tZ_m(S)=2$ implies that either $\nabla_m=\big\{ \{X^{\alpha}\},\{X^{\beta}\}\big\}$ or $\nabla_m=\big\{ \{X^{\alpha}\},\{X^{\beta}\},\{X^{\alpha},X^{\beta}\}\big\}$. For the first case, $X^\alpha-X^\beta$ is an indispensable binomial in $I_S$, and for the second one, $\gcd(X^{\alpha},X^{\beta})^{-1}\big( X^\alpha-X^\beta\big)$ is an indispensable binomial.
\end{proof}

\begin{corollary}
Given $S$ an affine semigroup satisfying the hypothesis of Theorem \ref{existenciaFp}. If there is no indispensable binomial in $I_S$, then $F_1(S)=F_2(S)$.
\end{corollary}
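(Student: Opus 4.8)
The plan is to reduce the equality $F_1(S)=F_2(S)$ to a purely combinatorial statement about factorization cardinalities, and then invoke the preceding Lemma in its contrapositive form. By the definitions recalled in the introduction, the set defining $F_2(S)$ splits as
\[
\{n\in \CaC(S)\mid 0<\sharp \tZ_n(S)\le 2\}=\{n\in \CaC(S)\mid \sharp \tZ_n(S)=1\}\cup \{n\in \CaC(S)\mid \sharp \tZ_n(S)=2\},
\]
while the set defining $F_1(S)$ is exactly the first piece on the right. Hence it suffices to show that the second piece is empty, that is, that no element of $S$ has exactly two factorizations.

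This is precisely the contrapositive of the previous Lemma: that result asserts that the existence of some $m\in S$ with $\sharp \tZ_m(S)=2$ forces $I_S$ to contain an indispensable binomial. Since by hypothesis $I_S$ has no indispensable binomial, no such $m$ can exist, and therefore $\{n\in \CaC(S)\mid \sharp \tZ_n(S)=2\}=\emptyset$. Consequently the two defining sets above coincide, and taking $\max_\preceq$ of the same set yields $F_1(S)=F_2(S)$.

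The hypothesis of Theorem \ref{existenciaFp} enters only to guarantee finiteness: it ensures that the common set $\{n\in \CaC(S)\mid \sharp \tZ_n(S)=1\}$ is nonempty and bounded, so that its maximum with respect to the graded order $\preceq$ exists and lies in $\N^q$ rather than being $(\infty,\ldots,\infty)$. I do not anticipate any real obstacle; the only point that needs care is to establish genuine equality of the two defining sets before passing to maxima (rather than arguing that their maxima happen to agree for some other reason), and the contrapositive of the Lemma delivers this set equality directly.
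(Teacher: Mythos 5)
Your argument is correct and is precisely the one the paper intends: the corollary is stated without proof as an immediate consequence of the preceding lemma, whose contrapositive shows that the absence of indispensable binomials forces $\{n\in \CaC(S)\mid \sharp \tZ_n(S)=2\}=\emptyset$, so the defining sets of $F_1(S)$ and $F_2(S)$ coincide. Your additional remark on why the hypothesis of Theorem \ref{existenciaFp} guarantees the maxima are finite is a sensible (and correct) filling-in of what the paper leaves implicit.
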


From $\CaD(\Lambda,2)$, as defined in \eqref{Dset}, we set 
$$\CaD''(\Lambda)=\left\{ \gamma=(\gamma_1,\ldots ,\gamma_h)\mid \gamma_i\in \N,~\gamma_i\leq 2\lambda_i \right\}.$$

\begin{corollary}
Let $\gamma\in \CaD''(\Lambda)$ satisfying $\# \tZ_{\sum_{i=1}^h\gamma_i a_i}(S)=2$. Then, there exist $\gamma'\in \CaD''(\Lambda)$ and $X^\delta\in \k[x_1,\ldots, x_h]$ such that  $X^\gamma - X^{\gamma'}= X^\delta\big(X^\alpha-X^\beta\big)$ with $X^\alpha-X^\beta$ an indispensable binomial in $I_S$. 
\end{corollary}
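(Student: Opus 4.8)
The goal is to show that if $\gamma \in \CaD''(\Lambda)$ has exactly two factorizations, then the binomial $X^\gamma - X^{\gamma'}$ connecting those two factorizations is, up to a monomial factor, an indispensable binomial. The plan is to exploit the preceding lemma, which guarantees that whenever $\sharp \tZ_m(S) = 2$ the simplicial complex $\nabla_m$ has one of exactly two shapes, and in both cases an indispensable binomial of $I_S$ appears.

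First I would set $m = \sum_{i=1}^h \gamma_i a_i$ and let $\alpha, \beta \in \N^h$ be the two distinct factorizations of $m$, so that $\tZ_m(S) = \{\alpha, \beta\}$ and one of them equals $\gamma$; say $\gamma = \alpha$ and put $\gamma' = \beta$. By the argument in the preceding lemma, $\nabla_m$ is either $\big\{\{X^\alpha\}, \{X^\beta\}\big\}$ or $\big\{\{X^\alpha\}, \{X^\beta\}, \{X^\alpha, X^\beta\}\big\}$. In the first case $\gcd(X^\alpha, X^\beta) = 1$, so $X^\alpha - X^\beta$ is itself indispensable and we may take $X^\delta = 1$, giving $X^\gamma - X^{\gamma'} = X^\alpha - X^\beta$ directly. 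In the second case set $X^\delta = \gcd(X^\alpha, X^\beta)$ and write $X^\alpha = X^\delta X^{\alpha'}$, $X^\beta = X^\delta X^{\beta'}$ with $\gcd(X^{\alpha'}, X^{\beta'}) = 1$; then $X^{\alpha'} - X^{\beta'}$ is the indispensable binomial (this is exactly what the lemma produced), and factoring out $X^\delta$ yields $X^\gamma - X^{\gamma'} = X^\delta(X^{\alpha'} - X^{\beta'})$, which is the claimed form.

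The remaining point to verify is that $\gamma' = \beta$ again lies in $\CaD''(\Lambda)$, i.e. $\beta_i \le 2\lambda_i$ for all $i$. This is where I would invoke the finiteness machinery behind Theorem \ref{existenciaFp}: the proof of that theorem shows that any element of $S$ with at most $p$ factorizations has all its exponents bounded by $p\lambda_i$ in \emph{every} factorization, since an exponent exceeding $p\lambda_k$ forces $p+1$ distinct factorizations via the repeated substitution $\lambda_k a_k = \sum_{i \neq k} \alpha_{k,i} a_i$. Applying this with $p = 2$ to the element $m$, whose factorization count is $2 \le 2$, guarantees that both $\alpha$ and $\beta$ have every coordinate bounded by $2\lambda_i$, hence both belong to $\CaD''(\Lambda)$.

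The main obstacle is precisely this boundedness check for $\gamma'$: the hypothesis places $\gamma$ in $\CaD''(\Lambda)$ by assumption, but the second factorization $\gamma'$ is only produced by the indispensability argument and is not \emph{a priori} controlled, so one must explicitly appeal to the two-factorization bound from Theorem \ref{existenciaFp} rather than treating membership in $\CaD''(\Lambda)$ as automatic. Once that bound is in hand, the identification of the indispensable binomial and the factorization $X^\gamma - X^{\gamma'} = X^\delta(X^\alpha - X^\beta)$ is immediate from the case analysis on $\nabla_m$ carried out in the preceding lemma.
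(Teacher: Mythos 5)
Your proof is correct and follows essentially the intended route: the paper states this corollary without proof, as an immediate consequence of the preceding lemma, whose case analysis on $\nabla_m$ (either $\big\{\{X^{\alpha}\},\{X^{\beta}\}\big\}$ or $\big\{\{X^{\alpha}\},\{X^{\beta}\},\{X^{\alpha},X^{\beta}\}\big\}$) is exactly what you use to extract the indispensable binomial, with $X^\delta=\gcd(X^\alpha,X^\beta)$ in the second case. Your explicit verification that $\gamma'\in \CaD''(\Lambda)$, via the bound from the proof of Theorem \ref{existenciaFp} with $p=2$ (a coordinate exceeding $2\lambda_k$ would force at least $3$ factorizations), correctly supplies the one detail the paper leaves implicit.
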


As in the case $p=1$, Algorithm \ref{computeFp} can be improved for $p=2$ using results from this section. We can determine whether there are elements in the semigroup with only two ways of writing by checking whether there are any indispensable binomials in $I_S$.  Furthermore, if there are some indispensable binomials, we can significantly reduce the set of elements in the semigroup that can have two ways of writing.

\begin{algorithm}[H]
\caption{Improved computation of $F_2(S)$.}\label{computeF2}
\KwIn{A minimal system of generators $\{a_1,\dots,a_h\}$ of $S$.}
\KwOut{$F_2(S)$.}

\If{there is an extremal ray of $\CaC(S)$ with only one minimal generator of $S$}
    {\Return{$(\infty,\dots,\infty)$}}

\If{There is no indispensable binomial in $I_S$}
    {\Return{$F_2(S)=F_1(S)$}}

${\cal B}\leftarrow $ a (reduced) Gröbner basis of $I_S$\;
$\Lambda\leftarrow (\lambda_1,\ldots , \lambda_h)\in \N^h$ such that $x_k^{\lambda_k}$ is a monomial of a binomial in ${\cal B}$\;
$D \leftarrow \CaD''(\Lambda)$\;
$I \leftarrow$ the set of indispensable binomials in $I_S$\;

$G \leftarrow \{(\gamma,\gamma')\in \N^{2h} \mid X^\gamma - X^{\gamma'}\in I\}$\;

$D\leftarrow D\setminus \{\gamma,\gamma'\in \N^{h} \mid (\gamma,\gamma')\in G\}$\;

$G \leftarrow \{\gamma\in \N^{h} \mid (\gamma,\gamma')\in G \text{ for some } \gamma' \in \mathbb N^h\}$\;

\While{there is $\gamma,\gamma'\in D$ with $X^\gamma - X^{\gamma'}= bX^\delta$, such that $b\in I$}
    {
    \If{$\# \tZ_{\sum_{i=1}^h\gamma_i a_i}(S)=2$}
        {
       $G \leftarrow G\cup \{\gamma\}$\;
       $D\leftarrow D\setminus\{\gamma,\gamma'\}$\;
       }
    }
{$f\leftarrow \max_\preceq\{\sum_{i=1}^{h}\gamma_ia_i\mid (\gamma_1,\dots,\gamma_h)\in G\}$}\;
\Return{$F_2(S)=\max_\preceq\{F_1(S),f\}$}
\end{algorithm}
\begin{example}

Continuing with the semigroup of the examples \ref{ex1} and \ref{ex2}.
For this semigroup, all the binomials of the Gr\"obner basis given in Example \ref{ex1} are indispensable.

We have that $\Lambda = (4,3,6,5,11)$, and therefore the set $\CaD''(\Lambda)$ is equal to $\left\{ \gamma \in \N^p\mid \gamma \leq (8,6,12,10,22)\right\}$. We use the bound $(8, 6, 12, 10, 22)$ to compute the set $G$ from the set $D$ that initially contains $126721$ elements. These elements are the factorizations of $10071$ different elements in the monoid $S$. We search the maximum with respect to the graded lexicographic order having exactly $2$ factorizations. For this sake, we sort all the $10071$ different elements obtained in the semigroup, and, starting from the biggest one, the element $(70,164)$, we stop after we find an element with exactly $2$ factorizations. The element found is $(2,83)$ and thus $F_2(S)=(2,83)$. 
\end{example}

\section{$p$-Frobenius of $\N^q$-gluing affine semigroups}\label{gluing}
From now on, consider $S=\langle a_1,\ldots, a_h\rangle$ an affine semigroup, $d\in \N$, $\gamma=(\gamma_1,\ldots,\gamma_q)\in \N^q$ with $d$ and $\gcd(\gamma_1,\ldots,\gamma_q)$ coprime, and such that $S'=S\oplus_{d,\gamma} \mathbb N^q$ is an affine semigroup. In this section, we study the behavior of $F_p(S')$ with respect to $F_p(S)$.

For numerical semigroups, that is $q=1$, the relationship between $F_0(S')$ and $F_0(S)$ is the well-known formula $F_0(S')=dF_0(S)+(d-1)\gamma$. Note that, for $p=0$ and $q>1$, the $0$-Frobenius vector is finite if and only if $S$ is a $\CaC$-semigroup. Also note that for any $S$, $\CaC(S')\setminus S'$ is not finite. Therefore, we assume that $p\ge 1$.

\begin{lemma} \label{gamma coef}
    Let $s'=ds+a\gamma \in S'$ with $s\in S$ and $0\leq a \leq d-2$. Then, $\#Z_{s'}(S')=\#Z_{s'+\gamma}(S')$.
\end{lemma}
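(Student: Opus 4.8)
The plan is to count factorizations in $S'$ by recording how many copies of the generator $\gamma$ are used. Since $S'=\langle da_1,\dots,da_h,\gamma\rangle$, every factorization of an element $n\in S'$ is a tuple $(\lambda_1,\dots,\lambda_h,c)\in\N^{h+1}$ with $n=d\big(\sum_{i=1}^h\lambda_i a_i\big)+c\gamma$; fixing $c$, the remaining data $(\lambda_1,\dots,\lambda_h)$ is exactly a factorization in $S$ of $\sigma_c:=(n-c\gamma)/d$. This yields the decomposition
\[
\# Z_n(S')=\sum_{c\ge 0}\# Z_{(n-c\gamma)/d}(S),
\]
with the convention that $\# Z_x(S)=0$ whenever $x$ fails to be an element of $S$ (in particular when $(n-c\gamma)/d$ is not a nonnegative integer vector). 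First I would record this identity, valid for every $n\in S'$.

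Next I would pin down, for $n=ds+a\gamma$ with $0\le a\le d-1$, which indices $c$ actually contribute. Writing $n-c\gamma=d\,s+(a-c)\gamma$, divisibility by $d$ in each coordinate amounts to $d\mid (a-c)\gamma_j$ for all $j$. The crux of the argument---and the only place the hypothesis enters---is the number-theoretic claim that, because $\gcd\big(d,\gcd(\gamma_1,\dots,\gamma_q)\big)=1$, one has $d\mid (a-c)\gamma_j$ for every $j$ if and only if $d\mid(a-c)$. I would prove this by setting $e=\gcd(d,a-c)$, $d'=d/e$ and $m'=(a-c)/e$ (so $\gcd(d',m')=1$): the hypothesis $d\mid(a-c)\gamma_j$ reads $d'\mid m'\gamma_j$, whence $d'\mid\gamma_j$ for all $j$, so $d'\mid\gcd(\gamma_1,\dots,\gamma_q)$; together with $d'\mid d$ and $\gcd\big(d,\gcd(\gamma_1,\dots,\gamma_q)\big)=1$ this forces $d'=1$, i.e. $d\mid(a-c)$.

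Granting the claim, the contributing $c$ are precisely those with $c\equiv a\pmod d$; since $0\le a\le d-1$ and $c\ge 0$, these are $c=a+kd$ for $k\ge 0$, for which $\sigma_c=s-k\gamma$. Substituting gives
\[
\# Z_{ds+a\gamma}(S')=\sum_{k\ge 0}\# Z_{s-k\gamma}(S),
\]
a finite sum (the terms vanish once $s-k\gamma\notin\N^q$) whose value is visibly independent of $a$ as long as $0\le a\le d-1$. Finally, since $0\le a\le d-2$ guarantees that both $a$ and $a+1$ lie in the range $[0,d-1]$, applying the formula to $a$ and to $a+1$ yields $\# Z_{ds+a\gamma}(S')=\# Z_{ds+(a+1)\gamma}(S')$, which is the desired equality $\# Z_{s'}(S')=\# Z_{s'+\gamma}(S')$.

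The main obstacle is the coprimality/divisibility step in the second paragraph: everything else is a clean bookkeeping of factorizations, but the equality of the two counts hinges entirely on the fact that the hypothesis $\gcd\big(d,\gcd(\gamma_1,\dots,\gamma_q)\big)=1$ collapses the coordinatewise divisibility conditions into the single condition $d\mid(a-c)$, so that the admissible reductions $\sigma_c$ form the same list $s,s-\gamma,s-2\gamma,\dots$ for both $a$ and $a+1$. Note also that this is exactly where the bound $a\le d-2$ is needed: at $a=d-1$ the shift $a+1=d$ would reset the $\gamma$-count and introduce the extra term $\# Z_{s+\gamma}(S)$, breaking the equality.
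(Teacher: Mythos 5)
Your proof is correct, but it takes a genuinely different route from the paper's. The paper argues via an explicit bijection: it considers the map $\nu\colon \tZ_{s'}(S')\to \tZ_{s'+\gamma}(S')$, $(\lambda_1,\ldots,\lambda_h,\lambda_\gamma)\mapsto(\lambda_1,\ldots,\lambda_h,\lambda_\gamma+1)$, which is obviously injective, and proves surjectivity by exactly the arithmetic point you isolate: for $\lambda'\in \tZ_{s'+\gamma}(S')$, reducing $\lambda'_1 da_1+\cdots+\lambda'_h da_h+\lambda'_\gamma\gamma=ds+(a+1)\gamma$ modulo $d$ and using that $d$ and $\gcd(\gamma_1,\ldots,\gamma_q)$ are coprime forces $\lambda'_\gamma\equiv a+1 \pmod{d}$, whence (since $0\le a\le d-2$) $\lambda'_\gamma\ge a+1>0$ and one may subtract $1$ from the last coordinate. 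You instead stratify $\tZ_n(S')$ by the multiplicity $c$ of $\gamma$ and derive the closed formula $\#\tZ_{ds+a\gamma}(S')=\sum_{k\ge 0}\#\tZ_{s-k\gamma}(S)$ for all $0\le a\le d-1$, whose right-hand side is visibly independent of $a$; your coprimality claim ($d\mid(a-c)\gamma_j$ for every $j$ if and only if $d\mid a-c$) is the same congruence the paper exploits, deployed to locate the contributing strata rather than to bound $\lambda'_\gamma$ from below. Your route is a bit longer but buys more: an explicit count of factorizations in $S'$ in terms of factorization counts in $S$, valid for every element of $S'$, which makes transparent why the statement fails at $a=d-1$ (the extra term $\#\tZ_{s+\gamma}(S)$, as you note) and could be reused in the rest of the section, where factorizations of $dF_p(S)+(d-1)\gamma$ are analyzed according to their $\gamma$-coordinate (Theorem \ref{main gluing}). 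The paper's bijection is the shorter, more direct argument for the bare equality.
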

\begin{proof}
    Remember that $S'=\langle da_1, \ldots, da_h, \gamma \rangle$ and consider the injective map
\begin{align*}
\nu \colon Z_{s'}(S') &\longrightarrow  Z_{s'+\gamma}(S') \\
(\lambda_1, \ldots , \lambda_h, \lambda_\gamma) &\longmapsto (\lambda_1, \ldots , \lambda_h, \lambda_\gamma +1).
\end{align*}
To conclude, we prove that $\nu$ is surjective.

Take $\lambda'=(\lambda'_1, \ldots , \lambda'_h, \lambda'_\gamma) \in Z_{s'+\gamma}(S')$, that is,
\[
\lambda'_1 da_1 + \cdots + \lambda'_h da_h + \lambda'_\gamma \gamma = s'+\gamma = ds + (a+1)\gamma. 
\]
This implies that $(\lambda'_\gamma \gamma -(a+1)\gamma) \equiv 0 \; (\mod \;d)$, and since $\gcd(\gamma_1,\ldots,\gamma_q)$ and $d$ are coprime, it must be $\lambda'_\gamma \equiv a+1 \; (\mod \; d)$. In turn, since $\lambda'_\gamma \in \mathbb N$ and $0\le a \le d-2$, we must have $\lambda'_\gamma \geq a+1 >0$. Thus, $\lambda = (\lambda'_1, \ldots , \lambda'_h, \lambda'_\gamma-1) \in  Z_{s'}(S') $, and clearly, $\nu(\lambda)=\lambda'$.
\end{proof}

This allows us to give an upper bound for the $p$-Frobenius vector of  $S'$ when $p\ge 1$.

\begin{proposition}
     \label{gp: bound} Let $S$ and $S'$ as defined before and let $p\ge1$. Then $F_p(S')\preceq dF_p(S)+(d-1)\gamma$.
\end{proposition}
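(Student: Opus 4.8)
The plan is to show that the whole set $\{n\in\CaC(S')\mid 0<\#Z_n(S')\le p\}$ is bounded above, with respect to $\preceq$, by $dF_p(S)+(d-1)\gamma$; the maximum of this set is exactly $F_p(S')$, so the bound follows. The starting observation is that, because $\gamma\in S$, every $n\in S'$ can be written as $n=ds+a\gamma$ with $s\in S$ and $0\le a\le d-1$: taking any $S'$-factorization $n=\sum_i\lambda_i(da_i)+\lambda_\gamma\gamma$ and writing $\lambda_\gamma=qd+a$ by Euclidean division, one absorbs $qd\gamma=d(q\gamma)$ into the $d$-scaled part and sets $s=\sum_i\lambda_i a_i+q\gamma\in S$.

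From here I would argue in four steps. First, pick an arbitrary $n$ from the defining set of $F_p(S')$ and decompose it as $n=ds+a\gamma$ as above. Second, apply Lemma \ref{gamma coef} repeatedly, raising the coefficient of $\gamma$ from $a$ up to $d-1$, to conclude $\#Z_n(S')=\#Z_m(S')$ for $m:=ds+(d-1)\gamma$; in particular $0<\#Z_m(S')\le p$. Third, bound the factorizations of $s$ in $S$: the assignment $(\mu_1,\dots,\mu_h)\mapsto(\mu_1,\dots,\mu_h,d-1)$ is an injection $Z_s(S)\hookrightarrow Z_m(S')$, whence $1\le\#Z_s(S)\le\#Z_m(S')\le p$, so $s$ lies in the set defining $F_p(S)$ and therefore $s\preceq F_p(S)$. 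Fourth, transport this forward: $s\preceq F_p(S)$ gives $ds\preceq dF_p(S)$ and then $ds+(d-1)\gamma\preceq dF_p(S)+(d-1)\gamma$, while gradedness of $\preceq$ together with $\gamma\neq 0$ gives $n=ds+a\gamma\preceq ds+(d-1)\gamma=m$. Concatenating the three comparisons yields $n\preceq dF_p(S)+(d-1)\gamma$, which is what we want.

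The order-theoretic monotonicities underlying the fourth step are routine but must be recorded: translation invariance $u\preceq v\Rightarrow u+w\preceq v+w$ is part of the definition of a monomial order, and the scaling rule $u\preceq v\Rightarrow du\preceq dv$ follows by induction, since from $ku\preceq kv$ and $u\preceq v$ one obtains $(k+1)u\preceq(k+1)v$. The point that needs the most care is the second step: I must check that Lemma \ref{gamma coef} genuinely applies at each intermediate coefficient, i.e.\ that the $\gamma$-coefficient stays in the range permitted by its hypothesis $0\le a\le d-2$ until the final application, which is precisely why the chain of equalities of factorization counts terminates exactly at $(d-1)\gamma$ and no further. Finally, I would handle the finiteness conventions from the introduction separately: if $F_p(S)=(\infty,\dots,\infty)$ the inequality is vacuous, and if $F_p(S)$ is finite then the displayed bound shows the defining set of $F_p(S')$ is bounded above and hence finite for the graded order $\preceq$, so $F_p(S')$ exists and satisfies the stated inequality.
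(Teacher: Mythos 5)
Your proof is correct and follows essentially the same route as the paper's: the decomposition $n=ds+a\gamma$ with $0\le a\le d-1$, repeated application of Lemma~\ref{gamma coef} to push the $\gamma$-coefficient up to $d-1$, and the injection $\tZ_s(S)\hookrightarrow \tZ_{ds+(d-1)\gamma}(S')$ obtained by appending $d-1$ as the $\gamma$-coordinate. The only difference is organizational: you bound every element of the defining set directly, whereas the paper argues by contradiction with the $\preceq$-maximality of $F_p(S')$; your version has the minor bonus of explicitly settling the finiteness of $F_p(S')$, which the paper's proof tacitly assumes.
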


\begin{proof}
We have that $F_p(S') \in S'$, thus $F_p(S')=ds+a\gamma$, for some $s\in S$ and $a\in \mathbb N$. Since $\gamma \in S$, we can further assume that $0\leq a \leq d-1$. 

We claim that it must be $F_p(S')=ds+(d-1)\gamma$. Indeed, if $F_p(S')=ds+a\gamma$ with $a<d-1$, then, by Lemma \ref{gamma coef}, $F_p(S') + \gamma$ is also an element of $\{n\in \N\mid \# \tZ_n(S)\le p\}$, contradicting the $\preceq$-maximality of $F_p(S')$.

Finally, we prove that $s\preceq F_p(S)$. By contradiction, if $s \succ F_p(S)$, then $\# \tZ_s(S) \ge p+1$, and $\# \tZ_{sd+(d-1)\gamma}(S) \ge p+1$.
\end{proof}

The following result shows a necessary and sufficient condition for equality to hold in Proposition \ref{gp: bound} whenever $F_p(S)$ is such that $\#\tZ_{F_p(S)}(S)=p$.
\begin{theorem} \label{main gluing}
Assume that $\#\tZ_{F_p(S)}(S)=p$. Then,
$F_p(S')=dF_p(S)+(d-1)\gamma$ if and only if for every $b\in \tZ_\gamma(S)$ there is no $c\in \tZ_{F_p(S)}(S)$ such that  $b\le_{\N^h} c$, where $\le_{\N^h}$ denotes the partial order given by $b_i \le c_i$ for every $i \in \{1, \ldots, h\}.$
\end{theorem}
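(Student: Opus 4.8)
The plan is to set $w := dF_p(S) + (d-1)\gamma$ and reduce the claimed equality $F_p(S')=w$ first to a count on the single fibre $\tZ_{F_p(S)-\gamma}(S)$, and then to the stated combinatorial condition. I would begin with the elementary observation that, since $\#\tZ_{F_p(S)}(S)=p\ge 1$, the factorization count of $w$ in $S'$ is always at least $p$, so $w$ lies in the defining set $\{n\mid 0<\#\tZ_n(S')\le p\}$ exactly when $\#\tZ_w(S')=p$. Coupling this with Proposition \ref{gp: bound}, which already gives $F_p(S')\preceq w$, yields the equivalence
$$F_p(S')=w \iff \#\tZ_w(S')=p.$$
Indeed, if $\#\tZ_w(S')\le p$ then $w$ belongs to the defining set, forcing $w\preceq F_p(S')$ and hence equality; the converse is immediate because $F_p(S')$ must itself lie in that set.

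The core step is to compute $\#\tZ_w(S')$ explicitly. Writing a factorization of $w$ in $S'=\langle da_1,\ldots,da_h,\gamma\rangle$ as $(\lambda_1,\ldots,\lambda_h,\lambda_\gamma)$, the equation $d\sum_i \lambda_i a_i+\lambda_\gamma\gamma = dF_p(S)+(d-1)\gamma$ yields, exactly as in the proof of Lemma \ref{gamma coef}, the congruence $\lambda_\gamma\equiv d-1 \pmod d$; here the coprimality of $d$ with $\gcd(\gamma_1,\ldots,\gamma_q)$ is what makes the congruence unconditional. Setting $\lambda_\gamma=(d-1)+md$ with $m\ge 0$ then forces $\sum_i \lambda_i a_i = F_p(S)-m\gamma$, so the fibre splits as a disjoint union
$$\tZ_w(S')\;\cong\;\bigsqcup_{m\ge 0}\tZ_{F_p(S)-m\gamma}(S),$$
the $m=0$ term contributing exactly $p$ factorizations by hypothesis. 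Hence $\#\tZ_w(S')=p$ if and only if $\tZ_{F_p(S)-m\gamma}(S)=\emptyset$ for every $m\ge 1$.

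Next I would collapse these infinitely many conditions to the single one at $m=1$. Since $\gamma\in S$, the identity $F_p(S)-\gamma=(F_p(S)-m\gamma)+(m-1)\gamma$ shows that a nonempty fibre at any $m\ge 1$ produces a nonempty fibre at $m=1$, and the reverse is trivial; thus $\#\tZ_w(S')=p$ is equivalent to $\tZ_{F_p(S)-\gamma}(S)=\emptyset$. Finally I would translate this emptiness into the stated condition: given $\lambda\in\tZ_{F_p(S)-\gamma}(S)$ and any $b\in\tZ_\gamma(S)$ (nonempty since $\gamma\in S$), the sum $c:=\lambda+b$ lies in $\tZ_{F_p(S)}(S)$ with $b\le_{\N^h}c$; conversely any pair $b\le_{\N^h}c$ with $b\in\tZ_\gamma(S)$ and $c\in\tZ_{F_p(S)}(S)$ gives $c-b\in\tZ_{F_p(S)-\gamma}(S)$. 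Therefore $\tZ_{F_p(S)-\gamma}(S)=\emptyset$ precisely when no such pair exists, which is exactly the condition in the statement, and chaining the equivalences finishes the proof.

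I expect the main obstacle to be the rigorous justification of the fibre decomposition in the second paragraph — extracting the congruence $\lambda_\gamma\equiv d-1\pmod d$ and reindexing by $m$ — together with the care needed to see that the hypothesis $\#\tZ_{F_p(S)}(S)=p$ (rather than merely $\le p$) is exactly what pins the count down to $p$ and is therefore genuinely used. The combinatorial translation and the reduction from all $m\ge 1$ to $m=1$ are routine once $\gamma\in S$ is invoked.
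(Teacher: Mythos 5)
Your proof is correct, and while it rests on the same two ingredients as the paper's --- the bound $F_p(S')\preceq dF_p(S)+(d-1)\gamma$ from Proposition \ref{gp: bound} and the congruence $\lambda_\gamma\equiv d-1 \ (\mod d)$ extracted from coprimality as in Lemma \ref{gamma coef} --- it is organized along a genuinely different route. The paper argues the two directions separately and asymmetrically: for one direction it exhibits the single extra factorization $(c-b,\,2d-1)$ of $s'=dF_p(S)+(d-1)\gamma$; for the other it takes an extra factorization $\mu$ with $\mu_\gamma=kd+(d-1)$, $k\geq 1$, and shows $\mu+kb\in \tZ_{F_p(S)}(S)$ dominates $b$, a contradiction. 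You instead compute the whole fibre once, via the bijection $\tZ_{s'}(S')\cong\bigsqcup_{m\geq 0}\tZ_{F_p(S)-m\gamma}(S)$ (the union being effectively finite), reduce the equality $F_p(S')=s'$ to the exact count $\#\tZ_{s'}(S')=p$, collapse all conditions $m\geq 1$ to $m=1$ using $\gamma\in S$, and translate $\tZ_{F_p(S)-\gamma}(S)=\emptyset$ into the stated $b\le_{\N^h}c$ condition; both of the paper's constructions are special cases of this decomposition ($m=1$ for the extra factorization, adding $kb$ for the collapse). What your version buys: a symmetric chain of equivalences in place of two ad hoc arguments, an explicit identification of where the hypothesis $\#\tZ_{F_p(S)}(S)=p$ (rather than $\leq p$) is used, and the cleaner intermediate criterion that $F_p(S')=dF_p(S)+(d-1)\gamma$ if and only if $F_p(S)-\gamma\notin S$, which the paper never makes explicit. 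What the paper's version buys is brevity, avoiding the bookkeeping of the indexed disjoint union. One point you share tacitly with the paper: both arguments presuppose $F_p(S')$ is finite (so that maximality in the defining set can be invoked); this does follow from finiteness of $F_p(S)$ via Theorem \ref{existenciaFp}, since $\lambda_k a_k=\sum_{i\neq k}\alpha_{k,i}a_i$ scales by $d$ and $d\gamma\in\langle da_1,\ldots,da_h\rangle$, but neither you nor the paper says so.
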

\begin{proof}

Let $s'=dF_p(S)+(d-1)\gamma$.

Assume there exist $b\in \tZ_\gamma(S)$ and $c\in \tZ_{F_p(S)}(S)$ such that  $b\le_{\N^h} c$. We show $s'\neq F_p(S')$ by proving $\# \tZ_{s'}(S') \geq p+1$. Indeed, notice that every $\lambda \in \tZ_{F_p(S)}(S)$ gives an element $(\lambda,d-1) \in \tZ_{s'}(S')$. This implies $\# \tZ_{s'}(S') \geq \# \tZ_{F_p(S)}(S)=p$. Moreover, one can check that $(c-b,2d-1) \in \tZ_{s'}(S')$ gives another element in the set. 

Conversely, let $\tZ_{F_p(S)}(S)=\{\lambda^{(1)}, \ldots, \lambda^{(p)}\}$. As just noticed, we have that
$$L=\left\{\left(\lambda^{(1)},d-1\right), \ldots, \left( \lambda^{(p)},d-1\right)\right\} \subseteq \tZ_{s'}(S').$$
Assume by contradiction that $F_p(S')\neq s'$. Then, by Proposition \ref{gp: bound} it must be $F_p(S')\prec s'$ . Thus,  $\#\tZ_{s'}(S')\geq p+1$. Let $\mu=(\mu_1, \ldots, \mu_h, \mu_\gamma) \in \tZ_{s'}(S')\setminus L$. Since $\gcd(\gamma_1,\ldots,\gamma_q)$ and $d$ are coprime, and
$$dF_p(S) + (d-1)\gamma = s' = \mu_1 da_1 + \cdots +\mu_h da_h + \mu_\gamma \gamma,$$
then  $\mu_\gamma \equiv d-1 \;(\mod \;d)$. If $\mu_\gamma = d-1$, then $(\mu_1, \ldots, \mu_h)\in \tZ_{F_p(S)}(S)$, which cannot be since $\mu \notin L$. Thus, $\mu_\gamma = kd+(d-1)$ for some $k\geq 1$. Taking $b=(b_1, \ldots, b_h) \in \tZ_\gamma (S)$, we have $(\mu_1+kb_1, \ldots, \mu_h +kb_h, d-1)\in \tZ_{s'}(S')$. Hence, $(\mu_1+kb_1, \ldots, \mu_h +kb_h) \in \tZ_{F_p(S)}(S)$, that is, $(\mu_1+kb_1, \ldots, \mu_h +kb_h)=c$ for some $c\in \tZ_{F_p(S)}(S)$. This implies that $b\leq_{\mathbb N^h} c$, which contradicts the hypothesis.
\end{proof}

\noindent {\bf Funding}. 
The first-named author was supported by the grant PID2019-105896GB-I00 funded by MCIN/AEI/10.13039/501100011033. 

The second and fourth-named authors were supported partially by Junta de Andaluc\'{\i}a research groups FQM-343.

Consejería de Universidad, Investigación e Innovación de la Junta de Andalucía project ProyExcel\_00868 also partially supported all the authors.

Proyecto de investigación del Plan Propio – UCA 2022-2023 (PR2022-011) partially supported the first, second and fourth-named authors.

Proyecto de investigación del Plan Propio – UCA 2022-2023 (PR2022-004) partially supported the second and fourth-named authors.

\medskip
\noindent
{\small Evelia Rosa Garc\'{\i}a Barroso\\
Departamento de Matem\'aticas, Estad\'{\i}stica e I.O. \\
Secci\'on de Matem\'aticas, Universidad de La Laguna\\
Apartado de Correos 456\\
38200 La Laguna, Tenerife, Spain\\
e-mail: ergarcia@ull.es}

\medskip

\noindent {\small Juan Ignacio Garc\'{\i}a-Garc\'{\i}a \\
Departamento de Matem\'aticas/INDESS (Instituto Universitario para el Desarrollo Social Sostenible)\\
Universidad de C\'adiz\\
 E-11510 Puerto Real, C\'adiz, Spain\\
e-mail: ignacio.garcia@uca.es}

\medskip

\noindent
{\small Luis Jos\'e Santana S\'anchez\\
IMUVA-Mathematics Research Institute, 
Universidad de Valladolid \\
47011 Valladolid, Spain\\
e-mail: luisjose.santana@uva.es}

\medskip

\noindent {\small Alberto Vigneron-Tenorio\\
Departamento de Matem\'aticas/INDESS (Instituto Universitario para el Desarrollo Social Sostenible)\\
Universidad de C\'adiz\\
E-11406 Jerez de la Frontera, C\'adiz, Spain\\
e-mail: alberto.vigneron@uca.es}

\end{document}